\newtheorem{theorem}{Theorem}[section]
\newtheorem{lemma}[theorem]{Lemma}
\newtheorem{proposition}[theorem]{Proposition}
\theoremstyle{definition}
\newtheorem{definition}[theorem]{Definition}
\newtheorem{remark}[theorem]{Remark}
\newenvironment{customthm}[1]
  {\innercustomthm}
  {\endinnercustomthm}
\begin{document} 

\title{Okounkov bodies and the K\"ahler geometry of projective manifolds}
\author{David Witt Nystr\"om}
\maketitle

\begin{abstract}
Given a projective manifold $X$ equipped with an ample line bundle $L$, we show how to embed certain torus-invariant domains $D \subseteq\mathbb{C}^n$ into $X$ so that the Euclidean K\"ahler form on $D$ extends to a K\"ahler form on X lying in the first Chern class of $L$. This is done using Okounkov bodies $\Delta(L)$, and the image of $D$ under the standard moment map will approximate $\Delta(L)$. This means that the volume of $D$ can be made to approximate the K\"ahler volume of $X$ arbitrarily well. As a special case we can let $D$ be an ellipsoid. We also have similar results when $L$ is just big.
\end{abstract}

\section{Introduction}

In toric geometry there is a beautiful correspondence between Delzant polytopes $\Delta$ and toric manifolds $X_{\Delta}$ equipped with an ample torus-invariant line bundles $L_{\Delta}$. This is important since many properties of $L_{\Delta}$ can be read directly from the polytope $\Delta.$ Okounkov found in \cite{Okounkov1,Okounkov2} a generalization of sorts, namely a way to associate a convex body $\Delta(L)$ to an ample line bundle $L$ on a projective manifold $X$, depending on the choice of a flag of smooth irreducible subvarieties in $X$. In the toric case, if one uses a torus-invariant flag, one essentially gets back the polytope $\Delta$. The convex bodies $\Delta(L)$ are now called Okounkov bodies. They were popularized by the work of Kaveh-Khovanskii \cite{Kaveh1,Kaveh2} and Lazarsfeld-Musta\c{t}\u{a} \cite{LazMus}, where it was shown that the construction works in far greater generality, e.g. big line bundles (for more references see the exposition \cite{BouckOk}).

Recall that the volume of a line bundle measures the asymptotic growth of $h^0(X,kL):=\dim_{\mathbb{C}}H^0(X,kL)$: $$\textrm{vol}(L):=\limsup_{k\to \infty}\frac{n!}{k^n}h^0(X,kL).$$ $L$ is then said to be big if $\textrm{vol}(L)>0$. When $L$ is ample or nef, asymptotic Riemann-Roch together with Kodaira vanishing shows that $\textrm{vol}(L)=(L^n)$. This is not true in general, since $(L^n)$ can be negative while the volume always is nonnegative. 

The key fact about Okounkov bodies is that they capture this volume: 
\begin{equation} \label{volOk}
\textrm{vol}(L)=n!\textrm{vol}(\Delta(L)).
\end{equation} 
Here the volume of the Okounkov body is calculated using the Lebesgue measure. This means that results from convex analysis, e.g. the Brunn-Minkowski inequality, can be applied to study the volume of line bundles. 

In the toric setting, a fruitful way of thinking of $\Delta$ is as the image of a moment map. There is a holomorphic $(\mathbb{C}^*)^n$-action on $X_{\Delta}$ which lifts to $L_{\Delta}$ and choosing an $(S^1)^n$-invariant K\"ahler form $\omega_{\Delta}\in c_1(L_{\Delta})$ gives rise to a  symplectic moment map $\mu_{\omega_{\Delta}}$ whose image can be identified with $\Delta$. 

Building on joint work with Harada \cite{HarKav}, Kaveh shows in the recent work \cite{Kaveh3} how Okounkov body data can be used to gain insight into the symplectic geometry of $(X,\omega)$, where $\omega$ is some K\"ahler form in $c_1(L)$ (it does not matter which K\"ahler form $\omega\in c_1(L)$ one uses since by Moser's trick all such K\"ahler manifolds are symplectomorphic). 

In short, Kaveh constructs symplectic embeddings $f_k:((\mathbb{C}^*)^n,\eta_k) \hookrightarrow (X,\omega)$ where $\eta_{k}$ are $(S^1)^n$-invariant K\"ahler forms that depend on data related to a certain nonstandard Okounkov body $\Delta(L)$ (i.e. the order on $\mathbb{N}^n$ used is not the lexicographic one). As $k$ tends to infinity the image of the corresponding moment map will fill up more and more of $\Delta(L)$, showing that the symplectic volume of $((\mathbb{C}^*)^n,\eta_k)$ approaches that of $(X,\omega)$. Just as in \cite{HarKav} the construction uses the gradient-Hamiltonian flow introduced by Ruan \cite{Ruan}, and is thus fundamentally symplectic in nature. 

\subsection{Main results}

We first introduce the following notion:

\begin{definition}
We say that a K\"ahler manifold $(Y,\eta)$ fits into $(X,L)$ if for every relatively compact open set $U\subseteq Y$ there is a holomorphic embedding $f$ of $U$ into $X$ so that $f_*\eta$ extends to some K\"ahler form $\omega$ on $X$ lying in $c_1(L)$. If $\dim_{\mathbb{C}}Y=\dim_{\mathbb{C}}X=n$ and $$\int_Y\eta^n=\int_X c_1(L)^n$$ we say that $(Y,\eta)$ fits perfectly into $(X,L)$.
\end{definition}

The special case of $(\mathbb{C}^n,\eta)$ ($\eta$ being some nonstandard $S^1$-invariant K\"ahler form) fitting into $(X,L)$ was considered in \cite{WN}.

Let $$\mu(z):=(|z_1|^2,...,|z_n|^2),$$ which we note is a moment map of $(\mathbb{C}^n,\omega_{st})$ with respect to the standard torus-action (here $\omega_{st}:=dd^c|z|^2$ denotes the standard Euclidean K\"ahler form on $\mathbb{C}^n$). 

Pick a complete flag $X_{\bullet}:= X=X_0 \supset X_{1} \supset X_{n-1}\supset X_n= \{p\}$ of smooth irreducible subvarieties. One can then define the associated Okounkov body $\Delta(L)$. We introduce the notion of the Okounkov domain $D(L)\subseteq \mathbb{C}^n$ which is a torus-invariant domain with the property that $$\Delta(L)^{\circ}\subseteq \mu(D(L))\subseteq \Delta(L),$$ (in general both inclusions are strict). We note that by (\ref{volOk}) 
\begin{equation} \label{voleq2}
\int_{D(L)}\omega_{st}^n=\int_X c_1(L)^n.
\end{equation}

\begin{customthm}{A} \label{mainthmOk}
We have that $(D(L),\omega_{st})$ fits perfectly into $(X,L)$. We can furthermore choose each embedding $f:U\to X$ ($U\subset D(L)$) so that $$f^{-1}(X_i)=\{z_1=...=z_i=0\}\cap U.$$ 
\end{customthm}

So on $(f(U),f_*{\omega_{st}})\subseteq (X,\omega)$ there is a torus-action with moment map $\mu\circ f^{-1}$ whose image approximates $\Delta(L)$ and we can choose $U$ so that $$\int_{f(U)}\omega^n\approx \int_X \omega^n.$$    

These results are still true even when using some nonstandard additive order on $\mathbb{N}^n$ to define the Okounkov body $\Delta(L)$. In particular when using the deglex order, which gives rise to the infinitesimal Okounkov bodies that appear in \cite{LazMus} and in the recent work of K\"uronya-Lozovanu \cite{KL15a, KL15b}. 

When $L$ is very ample there is a particular choice of flag $X_{\bullet}$ which makes $D(L)$ an ellipsiod, namely the ellipsiod $E(1,...,1,(L^n))$ defined by the inequality $$\sum_{i=1}^{n-1}|z_i|^2+(L^n)^{-1}|z_n|^2<1.$$ This leads to the following theorem. 

\begin{customthm}{B} \label{thmB}
If $L$ is very ample then we have that $(E(1,...,1,(L^n)),0,\omega_{st})$ fits perfectly into $(X,L)$, and the associated embeddings can be chosen to be centered at any point $p\in X$.
\end{customthm}

There is an interesting connection between this result and the notion of Seshadri constants.

Recall the definition of the Seshadri constant $\epsilon(X,L,p),$ introduced by Demailly \cite{Demailly}.

\begin{definition}
The Seshadri constant of an ample line bundle $L$ at a point $p$ is given by $$\epsilon(X,L,p):=\inf_C \frac{L\cdot C}{\textrm{mult}_p C},$$ where the infimum is taken over all curves $C$ in $X.$
\end{definition}

One can show that the Seshadri constant $\epsilon(X,L,p)$ also measures the maximal size of embedded balls centered at $p$ such that the restricted K\"ahler structure is standard.

\begin{theorem} \label{Seshthm}
We have that $\epsilon(X,L,p)$ is equal to the supremum of $r$ such that $(B_r,0,\omega_{st})$ fits into $(X,L)$ with the embeddings centered at $p$.
\end{theorem}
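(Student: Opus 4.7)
I would prove the two inequalities in the identity separately. The easy direction uses the Wirtinger/Lelong monotonicity inequality, while the other direction combines Theorem~A of this paper with the relationship between infinitesimal Okounkov bodies and Seshadri constants due to K\"uronya--Lozovanu.

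\emph{The inequality $\epsilon(X,L,p)\geq \sup r$.} Suppose $(B_r,0,\omega_{st})$ fits into $(X,L)$ with embeddings centered at $p$, and let $C\subset X$ be an irreducible curve through $p$ with $m:=\textrm{mult}_p C$. For every $s<r$, pick an embedding $f:U\hookrightarrow X$ with $U\supset\overline{B_s}$, $f(0)=p$, and $f_*\omega_{st}$ extending to some K\"ahler form $\omega\in c_1(L)$. Then $\tilde C:=f^{-1}(C)\cap U$ is an analytic curve through $0$ of multiplicity at least $m$, and Wirtinger's (Lelong) monotonicity for the flat form gives
\[
\int_{\tilde C\cap B_s}\omega_{st}\;\geq\; m\cdot s
\]
with the paper's normalization $B_r=\{|z|^2<r\}$, $\omega_{st}=dd^c|z|^2$. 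Since $f^*\omega=\omega_{st}$ on $U$, the left-hand side equals $\int_{C\cap f(B_s)}\omega\leq \int_C\omega=(L\cdot C)$. Letting $s\uparrow r$ and minimising over $C$ yields $\epsilon(X,L,p)\geq r$.

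\emph{The inequality $\epsilon(X,L,p)\leq \sup r$.} Suppose $r<\epsilon(X,L,p)$. By the K\"uronya--Lozovanu theorem \cite{KL15a,KL15b}, for the infinitesimal Okounkov body $\Delta(L)$ coming from a suitable flag on the blow-up at $p$ with the deglex order, the standard simplex
\[
\sigma_r=\{x\in\mathbb R^n_{\geq 0}\,:\, x_1+\cdots+x_n\leq r\}
\]
is contained in $\Delta(L)$. Because $D(L)$ is torus-invariant and $\mu(D(L))\supset \Delta(L)^{\circ}$, one has $D(L)\supset \mu^{-1}(\Delta(L)^{\circ})\supset \mu^{-1}(\sigma_r^{\circ})=B_r$. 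Applying Theorem~A to this flag shows that $(D(L),\omega_{st})$ fits perfectly into $(X,L)$, and the condition $f^{-1}(X_n)=f^{-1}(\{p\})=\{0\}\cap U$ from that theorem ensures the embeddings are centered at $p$. Restricting to relatively compact subsets of $B_r\subset D(L)$ yields the desired fitting of $(B_r,0,\omega_{st})$.

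\emph{Main obstacle and bookkeeping.} The decisive technical input is the K\"uronya--Lozovanu identification of the Seshadri constant with the side length of the largest standard simplex inside the infinitesimal Okounkov body; granted that, the proof reduces to assembling Wirtinger monotonicity with the torus-invariance of $D(L)$ and Theorem~A. The remaining care is normalisation: the conventions $\omega_{st}=dd^c|z|^2$, $B_r=\{|z|^2<r\}$, and the resulting measure of the unit disk must line up so that the intersection-theoretic bound $(L\cdot C)/m\geq r$ in the first direction matches the simplex side $r$ in the second. This unit-check, together with verifying that the ``centered at $p$'' clauses in Theorem~A and in the statement refer to the same vertex of the flag, constitutes the main non-formal content of the proof.
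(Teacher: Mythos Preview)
The paper does not actually prove this theorem: immediately after the statement it records that the result ``can be extracted from Lazarsfeld \cite{Lazarsfeld} (see Theorem 5.1.22 and Proposition 5.3.17); the main argument is due to McDuff--Polterovich \cite{McDuff}.'' So there is no in-paper proof to compare against, only a citation.

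Your proposal is nonetheless a legitimate argument, and it is worth noting where it agrees with and where it departs from the cited sources. Your first inequality (the Lelong/Wirtinger bound $(L\cdot C)\geq m\,r$) is essentially the classical argument reproduced in \cite{Lazarsfeld}, so nothing new there. Your second inequality, however, takes a genuinely different route from McDuff--Polterovich: their argument is a symplectic blow-up construction showing directly that if $\pi^*L-rE$ is ample then a standard ball of the appropriate radius embeds, whereas you deduce the embedding from Theorem~A of the present paper together with the K\"uronya--Lozovanu characterisation of $\epsilon(X,L,p)$ as the largest simplex in the infinitesimal Okounkov body. This is exactly the viewpoint the author advertises in the introduction (``Theorem~\ref{mainthmOk} can be thought of as a strengthening of Theorem~\ref{Seshthm}''), and it has the virtue of staying inside the paper's own framework; on the other hand it imports the K\"uronya--Lozovanu theorem as a black box, so it is not more elementary than the cited approach.

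Two small points of hygiene. First, in the second direction you should invoke $\Delta(L)^{ess}$ rather than $\Delta(L)^{\circ}$, since $D(L)=\mu^{-1}(\Delta(L)^{ess})$; for $r<\epsilon(X,L,p)$ the simplex $\sigma_r$ contains a neighbourhood of the origin in $\mathbb{R}^n_{\geq 0}$, so the essential interior is what gives $B_r\subset D(L)$. Second, the paper's infinitesimal Okounkov body is defined via a flag in $X$ with the deglex order (see the Remark in Section~\ref{Secok}), which the author notes is equivalent to the blow-up construction in \cite{KL15a,KL15b}; you should phrase the flag accordingly so that Theorem~A applies verbatim.
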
 

This result can be extracted from Lazarsfeld \cite{Lazarsfeld} (see Theorem 5.1.22 and Proposition 5.3.17); the main argument is due to McDuff-Polterovic \cite{McDuff}.

From Theorem \ref{Seshthm} follows the inequality $$\epsilon(X,L,p)\leq (L^n)^{\frac{1}{n}}.$$ When this inequality is strict for $(X,L,p)$ (which is the general case) it means that no ball $(B_r,0,\omega_{st})$ can fit perfectly into $(X,\omega_L)$ centered at $p$. Nevertheless Theorem \ref{thmB} says that one always can find an ellipsoid which fits perfectly into $(X,\omega_L)$ centered at $p$.

Let $\Delta(L)$ be an infinitesimal Okounkov body at $p$ and $D(L)$ the corresponding Okounkov domain. One can easily show that $$\epsilon(X,L,p)=\sup\{r: B_r\subseteq D(L)\}$$ so thus Theorem \ref{mainthmOk} can be thought of as strengthening of Theorem \ref{Seshthm}.

The proof of Theorem \ref{mainthmOk} relies on finding suitable toric degenerations. Here we follow \cite{Anderson}, but as in \cite{Ito} and \cite{Kaveh3} we do not degenerate the whole section ring $R(L)$ but rather $H^0(X,kL)$ for fixed $k$. We couple the degeneration with a max construction to find a suitable positive hermitian metric of $L$, whose curvature form will provide the appropriate K\"ahler form $\omega$ in the theorem. We recently used this technique to construct K\"ahler embeddings related to canonical growth conditions \cite[Thm. C]{WN}.

\subsection{The big case}

We have similar results when $L$ is just big. Then there are no longer any K\"ahler forms in $c_1(L)$ so instead we use K\"ahler currents in $c_1(L)$ with analytic singularities.

\begin{definition}
If $L$ is big we say that a K\"ahler manifold $(Y,\eta)$ fits into $(X,L)$ if for every relatively compact open set $U\subseteq Y$ there is a holomorphic embedding $f$ of $U$ into $X$ such that $f_*\eta$ extends to a K\"ahler current with analytic singularities on $X$ lying in $c_1(L)$. If $\dim_{\mathbb{C}}Y=\dim_{\mathbb{C}}X=n$ and $$\int_Y\eta^n=\int_X c_1(L)^n$$ we say that $(Y,\eta)$ fits perfectly into $(X,L)$
\end{definition}

\begin{customthm}{C} \label{mainthmbig}
We have that $(D(L),\omega_{st})$ fits perfectly into $(X,L)$. We can furthermore choose each embedding $f:U\to X$ ($U\subset D(L)$) so that $$f^{-1}(X_i)=\{z_1=...=z_i=0\}\cap U.$$ 
\end{customthm}

\subsection{Related work}

The work of Kaveh \cite{Kaveh3} which inspired this paper has already been mentioned. This built on joint work with Harada \cite{HarKav}, which in turn used the work of Anderson \cite{Anderson} on toric degenerations.

Anderson showed in \cite{Anderson} how, given some assumptions, the data generating the Okounkov body also gives rise to a degeneration of $(X,L)$ into a possibly singular toric variety $(X_{\Delta},L_{\Delta})$, where $\Delta=\Delta(L)$ (the assumptions force $\Delta(L)$ to be a polytope, which is not the case in general). In their important work \cite{HarKav} Harada-Kaveh used this to, under the same assumptions, to construct a completely integrable system $\{H_i\}$ on $(X,\omega)$, with $\omega$ a K\"ahler form in $c_1(L)$, such that $\Delta(L)$ precisely is the image of the moment map $\mu:=(H_1,...,H_n)$. More precisely, they find an open dense subset $U$ and a Hamiltonian $(S^1)^n$-action on $(U,\omega)$ such that the corresponding moment map $\mu:=(H_1,...,H_n)$ extends continuosly to the whole of $X$. Their construction uses the gradient-Hamiltonian flow introduced by Ruan \cite{Ruan}.

In the recent work \cite{WN}, given an ample line bundle $L$ and a point $p\in X$, we show how to construct an $(S^1)$-invariant plurisubharmonic function $\phi_{L,p}$ on $T_p X$, such that the corresponding growth condition $\phi_{L,p}+O(1)$ is canonically defined. We then prove that the growth condition provides a sufficient condition for certain K\"ahler balls $(B_1,\eta)$ to be embeddable into some $(X,\omega)$ with $\omega\in c_1(L)$ and K\"ahler \cite[Thm. D]{WN}.

The very general Seshadri constant $\epsilon(X,L;1)$ is defined as the supremum of $\epsilon(X,L;p)$ over $X$, which is the same as the Seshadri constant at a very general point. In \cite{Ito} Ito proved that if $\Delta$ is an integer polytope such that $\frac{1}{k}\Delta\subset \Delta(L)$ then $$\epsilon(X,L;1)\geq \frac{1}{k}\epsilon(X_{\Delta},L_{\Delta};1).$$ He did this using the same kind of toric degeneration as was later used by Kaveh in \cite{Kaveh3} and that we use here. One can easily show that this also follows from our results. This illustrates the difference between our results and those of Kaveh in \cite{Kaveh3}. Since Kaveh's construction is symplectic that only implies the weaker symplectic version of Ito's theorem, namely the corresponding lower bound on the Gromov width \cite[Cor. 8.4]{Kaveh3}.  

\subsection{Acknowledgements}

We wish to thank Julius Ross for many fruitful discussions relating to the topic of this paper. We also thank Kiumars Kaveh for sharing his very interesting preprint \cite{Kaveh3}.

During the preparation of this paper the author has received funding from the People Programme (Marie Curie Actions) of the European Union's Seventh Framework Programme (FP7/2007-2013) under REA grant agreement no 329070.

\section{Okounkov bodies and domains} \label{Secok}

Let $L$ be a big line bundle on a projective manifold $X$. Choose a complete flag $X=X_0 \supset X_{1} \supset X_{n-1}\supset X_n= \{p\}$ of smooth irreducible subvarieties of $X$, $\textrm{codim}X_i=i$. We can then choose local holomorphic coordinates $z_i$ centered at $p$ such that in some neighbourhood $U$ of $p$, $$X_i\cap U=\{z_1=...=z_i=0\}\cap U.$$ Also pick a local trivialization of $L$ near $p$. Locally near $p$ we can then write any section $s\in H^0(X,kL)$ as a Taylor series $$s=\sum_{\alpha}a_{\alpha}z^{\alpha}.$$ When $s$ is nonzero we let $$v(s):=\min\{\alpha: a_{\alpha}\neq 0\},$$ where the mininum is taken with respect to the lexicographic order (or some other additive order of choice). The Okounkov body $\Delta(L)$ of $L$ (for ease of notation the dependence of the flag is usually not written out) is then defined as $$\Delta(L):=Conv\left(\left\{\frac{v(s)}{k}: s\in H^0(X,kL)\setminus \{0\}, k\geq 1\right\}\right).$$ Here $Conv$ means the closed convex hull.

\begin{remark}
Another natural choice of order on $\mathbb{N}^n$ to use is the deglex order. This means that $\alpha<\beta$ if $|\alpha|<|\beta|$ ($|\alpha|:=\sum_i \alpha_i$) or else if $|\alpha|=|\beta|$ and $\alpha$ is less than $\beta$ lexicographically. If one uses this order to define the Okounkov body, this will only depend on the flag of subspaces of $T_p X$ given by $T_p X_i$, and it will be equivalent to the infinitesimal Okounkov body considered in \cite{LazMus} and in the recent work of K\"uronya-Lozovanu \cite{KL15a,KL15b} (see \cite{WN}).
\end{remark}

Let us define $$\mathcal{A}(kL):=\{v(s): s\in H^0(X,kL)\setminus \{0\}\}.$$ By elimination we can find sections $s_{\alpha}\in H^0(X,kL)$, $\alpha\in \mathcal{A}(kL)$, such that $$s_{\alpha}=z^{\alpha}+\sum_{\beta>\alpha, \beta\notin \mathcal{A}(kL)}a_{\beta}z^{\beta}.$$ If $$s=\sum_{\alpha\in \mathcal{A}(kL)}a_{\alpha}z^{\alpha}+\sum_{\beta \notin \mathcal{A}(kL)}a_{\beta}z^{\beta}$$ then we must have that $$s=\sum_{\alpha\in \mathcal{A}(kL)}a_{\alpha}s_{\alpha},$$ because otherwise we would have that $v(s-\sum a_{\alpha}s_{\alpha})\notin \mathcal{A}(kL)$. It follows that $s_{\alpha}$ is a basis for $H^0(X,kL)$  so  
\begin{equation} \label{numberofp}
|\mathcal{A}(kL)|=h^0(X,kL),
\end{equation}
where $|\mathcal{A}(kL)|$ denotes the number of points in $\mathcal{A}(kL)$. 

If $s=z^{\alpha_1}+\sum_{\beta>\alpha_1}a_{\beta}z^{\beta}$ and $t=z^{\alpha_2}+\sum_{\beta>\alpha_2}b_{\beta}z^{\beta}$ then $$st=z^{\alpha_1+\alpha_2}+\sum_{\beta>\alpha_1+\alpha_2}c_{\beta}z^{\beta}$$ and hence $v(st)=v(s)+v(t)$. This implies that for $k,m\in \mathbb{N}:$ 
\begin{equation} \label{semigroup}
\mathcal{A}(kL)+\mathcal{A}(mL)\subseteq \mathcal{A}((k+m)L)
\end{equation} 
and thus $$\Gamma(L):=\bigcup_{k\geq 1}\mathcal{A}(kL)\times \{k\}\subseteq \mathbb{N}^{n+1}$$ is a semigroup. 

Combined with a result by Khovanskii \cite[Prop. 2]{Khovanskii} it leads to the proof of the key result (see e.g. \cite{Kaveh1,Kaveh2} or \cite{LazMus}).

\begin{theorem} \label{Okvolume}
We have that $$\textrm{vol}(L)=n!\textrm{vol}(\Delta(L)),$$ where the volume of $\Delta(L)$ is calculated using the Lebesgue measure. 
\end{theorem}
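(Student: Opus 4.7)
The plan is to reduce the statement to a purely combinatorial fact about semigroups, namely Khovanskii's lemma referenced in the paper. The starting point is equation (\ref{numberofp}), which already identifies $h^0(X,kL)$ with the cardinality $|\mathcal{A}(kL)|$. Hence
\[
\mathrm{vol}(L)=\limsup_{k\to\infty}\frac{n!}{k^n}|\mathcal{A}(kL)|,
\]
and the task becomes to show
\[
\lim_{k\to\infty}\frac{|\mathcal{A}(kL)|}{k^n}=\mathrm{vol}(\Delta(L)).
\]
This is an asymptotic lattice-point count for the level-$k$ slice of the semigroup $\Gamma(L)\subseteq\mathbb{N}^{n+1}$ constructed above equation (\ref{semigroup}).

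First I would record the easy inclusion: every $v(s)/k\in\mathcal{A}(kL)/k$ lies in $\Delta(L)$ by definition, so
\[
\frac{|\mathcal{A}(kL)|}{k^n}\leq \frac{|k\Delta(L)\cap\mathbb{Z}^n|}{k^n}\to \mathrm{vol}(\Delta(L)),
\]
which gives the upper bound $\mathrm{vol}(L)\leq n!\,\mathrm{vol}(\Delta(L))$ once one also knows $\Delta(L)$ is bounded. Boundedness is a consequence of bigness: using a reference section one embeds $\mathcal{A}(kL)$ in a fixed lattice simplex scaled by $k$. For the matching lower bound the naive inclusion fails because $\Delta(L)$ is only a \emph{convex hull} of the normalized points $v(s)/k$, so most lattice points at level $k$ need not lie in $\mathcal{A}(kL)$ for any particular $k$. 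This is precisely the gap that Khovanskii's lemma fills: for a semigroup $\Gamma\subseteq\mathbb{N}^{n+1}$ whose associated cone is strongly convex and whose group of differences is a full sublattice, the deficiency
\[
\bigl|(\mathbb{Z}^n\times\{k\})\cap\mathrm{Cone}(\Gamma)\bigr|-|\Gamma\cap(\mathbb{Z}^n\times\{k\})|
\]
is $o(k^n)$, so both quantities have the same leading asymptotic $k^n\mathrm{vol}(\Delta)$.

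The main obstacle is therefore to verify Khovanskii's hypotheses for $\Gamma(L)$. Finite generation will generally fail, so I would apply the lemma instead to an exhaustion by finitely generated subsemigroups $\Gamma_m\subseteq\Gamma(L)$ generated by all elements of level $\leq m$; as $m\to\infty$ the associated convex bodies $\Delta_m$ increase to $\Delta(L)$, and monotone convergence of volumes supplies the desired lower bound
\[
\liminf_{k\to\infty}\frac{|\mathcal{A}(kL)|}{k^n}\geq \lim_{m\to\infty}\mathrm{vol}(\Delta_m)=\mathrm{vol}(\Delta(L)).
\]
The strong-convexity and lattice-generation hypotheses are checked by choosing the flag and trivialization suitably (one may assume, after passing to a sublattice or modifying the flag at $p$, that the sections $s_\alpha$ contain enough weights to span $\mathbb{Z}^n$; bigness guarantees $\Delta(L)$ has non-empty interior so the cone is full-dimensional). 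Combining the two inequalities yields the theorem, and in fact upgrades the $\limsup$ in the definition of $\mathrm{vol}(L)$ to a genuine limit.
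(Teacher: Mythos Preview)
Your proposal is correct and follows essentially the same approach the paper indicates: the paper does not give a detailed proof but simply records that $|\mathcal{A}(kL)|=h^0(X,kL)$, that $\Gamma(L)$ is a semigroup, and then invokes Khovanskii's result \cite[Prop.~2]{Khovanskii} together with the references \cite{Kaveh1,Kaveh2,LazMus}. Your sketch fleshes out precisely this route---the reduction to an asymptotic lattice-point count for the level slices of $\Gamma(L)$ and the use of Khovanskii's lemma (via an exhaustion by finitely generated subsemigroups) to close the gap between $|\mathcal{A}(kL)|$ and $|k\Delta(L)\cap\mathbb{Z}^n|$---so there is nothing materially different to compare.
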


From this we see that when $X$ has dimension one, $\Delta(L)$ is an interval of lenght $deg(L)$. When $L$ is ample one gets that $0\in \Delta(L)$ and thus 
\begin{equation} \label{okcurve}
\Delta(L)=[0,deg(L)].
\end{equation}

Let $$\Delta_k(L):=\frac{1}{k}Conv(\mathcal{A}(kL)).$$ From (\ref{semigroup}) we see that for $k,m\in \mathbb{N}:$
\begin{equation} \label{okincl}
\Delta_k(L)\subseteq \Delta_{km}(L).
\end{equation}

The following lemma is also an immediate consequence of the result of Khovanskii (see e.g. \cite[Lem. 2.3]{DWN}).

\begin{lemma} \label{compincl}
Let $K$ be a compact subset of $\Delta(L)^{\circ}$. Then for $k>0$ divible enough we have that $$K\subset \Delta_k(L).$$
\end{lemma}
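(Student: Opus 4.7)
The plan is to reduce the statement to a direct invocation of the Khovanskii semigroup theorem already cited in the excerpt, applied to $\Gamma(L)\subseteq \mathbb{N}^{n+1}$. By the inclusion (\ref{okincl}), $\Delta_k(L)\subseteq \Delta_{km}(L)$, so once $K\subseteq \Delta_{k_0}(L)$ for a single $k_0$, the same holds for every multiple $k$ of $k_0$, which is exactly what ``divisible enough'' requires; it thus suffices to produce one such $k_0$.

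Since $K$ is compact and lies in the open set $\Delta(L)^{\circ}$, I would first enclose $K$ in a polytope with rational vertices: pick $\alpha_1,\ldots,\alpha_m\in \mathbb{Q}^n$ with $K\subseteq Conv(\alpha_1,\ldots,\alpha_m)\subseteq \Delta(L)^{\circ}$, and let $d\in \mathbb{N}$ be a common denominator, so that each $d\alpha_j\in \mathbb{Z}^n$. Since each $\alpha_j\in \Delta(L)^{\circ}$, the lattice point $(d\alpha_j,d)$ lies in the interior of the real cone spanned by the semigroup $\Gamma(L)$. Khovanskii's theorem then supplies an integer $N$ such that for every multiple $k$ of $d$ with $k\geq N$, every lattice point of the form $(k\alpha_j,k)$ belongs to $\Gamma(L)$; equivalently, $k\alpha_j\in \mathcal{A}(kL)$, which gives $\alpha_j\in \Delta_k(L)$. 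Taking $k_0$ to be any such $k$, convexity of $\Delta_{k_0}(L)$ yields $K\subseteq Conv(\alpha_1,\ldots,\alpha_m)\subseteq \Delta_{k_0}(L)$, as required.

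The only real content is the appeal to Khovanskii's theorem. In particular, one has to check that $\Gamma(L)$ satisfies the standing hypotheses of that result, namely that it generates (as a real cone) precisely $\mathbb{R}_{\geq 0}\cdot (\Delta(L)\times \{1\})$ and that its projection to the last coordinate eventually contains every integer. Both are standard facts for $L$ big and are already implicit in the proof of Theorem \ref{Okvolume}; verifying them carefully is the only technical subtlety, and I would regard it as the main (albeit minor) obstacle. The geometric content of the lemma is otherwise entirely captured by Khovanskii's result.
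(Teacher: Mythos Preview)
Your proposal is correct and is precisely the approach the paper has in mind: the paper does not give an in-line proof but simply declares the lemma ``an immediate consequence of the result of Khovanskii'' (with a pointer to \cite[Lem.~2.3]{DWN}), and your argument is exactly the standard way one unpacks that implication. The only caveat is the one you already flag, namely that the hypotheses on $\Gamma(L)$ (generation of the full lattice $\mathbb{Z}^{n+1}$ as a group, so that the integer points $(k\alpha_j,k)$ lie in the group generated by $\Gamma(L)$) need to be verified, and these are indeed the facts established in the references for the proof of Theorem~\ref{Okvolume}.
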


From this it follows that $$\Delta(L)^{\circ}=\bigcup_{k\geq 1}\Delta_k(L)^{\circ}.$$ Let $\Delta_k(L)^{ess}$ denote the interior of $\Delta_k(L)$ as a subset of $\mathbb{R}^n_{\geq 0}$ with its induced topology.

\begin{definition}
We define the essential Okounkov body $\Delta(L)^{ess}$ as $$\Delta(L)^{ess}:=\bigcup_{k\geq 1}\Delta_k(L)^{ess}.$$
\end{definition} 

By (\ref{okincl}) we get that for any $k,m\in \mathbb{N},$ $\Delta_k(L)^{ess}\subseteq \Delta_{km}(L)^{ess}$ and thus $$\Delta(L)^{ess}=\bigcup_{k\geq 1}\Delta_{k!}(L)^{ess}.$$ We also see that $\Delta_{k!}(L)^{ess}$ is increasing in $k$ which then implies that $\Delta(L)^{ess}$ is an open convex subset of $\mathbb{R}_{\geq 0}^n$.  

\begin{lemma} \label{compincless}
Let $K$ be a compact subset of $\Delta(L)^{ess}$. Then for $k>0$ divible enough we have that $$K\subset \Delta_k(L)^{ess}.$$
\end{lemma}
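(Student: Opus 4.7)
The plan is to reduce the statement to a standard compactness argument on an exhaustion by open sets. The excerpt has already established that $\Delta(L)^{ess} = \bigcup_{k\geq 1}\Delta_{k!}(L)^{ess}$ and, crucially, that $\Delta_{k!}(L)^{ess}$ is \emph{increasing} in $k$; moreover each $\Delta_{k!}(L)^{ess}$ is open in $\mathbb{R}^n_{\geq 0}$ by definition. Thus the hypothesis gives us an increasing open cover of $K$ inside the ambient topological space $\mathbb{R}^n_{\geq 0}$.

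First I would invoke compactness: since $K \subseteq \bigcup_{k\geq 1}\Delta_{k!}(L)^{ess}$ and the sets on the right form a nested sequence of open subsets of $\mathbb{R}^n_{\geq 0}$, any finite subcover reduces to a single one, so there exists $N$ with
\[
K \subseteq \Delta_{N!}(L)^{ess}.
\]
Next I would upgrade this to the statement for every $k$ divisible by $N!$. If $k = N!\cdot m$ then (\ref{okincl}) gives $\Delta_{N!}(L) \subseteq \Delta_{k}(L)$, and a general fact about inclusions of subsets of any fixed topological space (namely $\mathbb{R}^n_{\geq 0}$ with its induced topology) says that $\mathrm{int}(A) \subseteq \mathrm{int}(B)$ whenever $A \subseteq B$. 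Applying this with $A = \Delta_{N!}(L)$ and $B = \Delta_k(L)$ yields $\Delta_{N!}(L)^{ess} \subseteq \Delta_k(L)^{ess}$, so $K \subseteq \Delta_k(L)^{ess}$ as required.

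I do not expect any real obstacle here: the substantive content (that the essential Okounkov body is exhausted by an increasing sequence of open convex sets $\Delta_{k!}(L)^{ess}$) has already been extracted from Lemma \ref{compincl} and (\ref{okincl}) in the preceding paragraph, and what remains is pure topology. The only small point to be careful about is passing from the inclusion $\Delta_{N!}(L) \subseteq \Delta_k(L)$ to the inclusion of their essential interiors; this is automatic because the essential interior is taken with respect to the \emph{same} ambient space $\mathbb{R}^n_{\geq 0}$ for both sets.
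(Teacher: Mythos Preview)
Your argument is correct: the paper has already established that $\Delta(L)^{ess}=\bigcup_{k\ge 1}\Delta_{k!}(L)^{ess}$ with the right-hand side an increasing union of sets open in $\mathbb{R}^n_{\ge 0}$, so compactness of $K$ immediately yields a single $N!$, and (\ref{okincl}) passes to interiors in the fixed ambient space $\mathbb{R}^n_{\ge 0}$ to give the claim for every multiple of $N!$. The paper's own proof is the one-line remark ``This is proved in the same way as Lemma \ref{compincl}'', which amounts to exactly this compactness-on-an-exhaustion argument, so your approach coincides with what the paper intends.
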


This is proved in the same way as Lemma \ref{compincl}.

It is easy to see that $$\Delta(L)\cap \{x_1=0\}\subseteq \Delta(L_{|X_1}),$$ where $\Delta(L_{|X_1})$ is defined using the induced flag $X_1\supset X_2 \supset ...\supset X_n$. When $L$ is ample one can use Ohsawa-Takegoshi to prove that we have an equality
\begin{equation} \label{okrestr}
\Delta(L)\cap \{x_1=0\}=\Delta(L_{|Y_1}),
\end{equation} 
(see e.g. \cite{DWN}).

Let $L_1$ denote the holomorphic line bundle associated with the divisor $X_1$. An important fact, proved by Lazarsfeld-Musta\c{t}\u{a} in \cite{LazMus} is that 
\begin{equation} \label{oktrans}
\Delta(L)\cap\{x_1\geq r\}=\Delta(L-rL_1)+re_1.
\end{equation}  

For $a\in \mathbb{R}^n$ we let $\Sigma_a$ denote the convex hull of $\{0,a_1e_1, a_2e_2,..., a_ne_n\}$ and $\Sigma_a^{ess}$ the interior of $\Sigma_a$ as a subset of $\mathbb{R}_{\geq 0}^n$. 

\begin{proposition} \label{specialbody}
If $L$ is very ample then there is a flag $X=X_0 \supset X_{1} \supset ...\supset X_n= \{p\}$ of smooth irreducible subvarieties of $X$ such that $$\Delta(L)=\Sigma_{(1,...,1,(L^n))}$$ and $$\Delta(L)^{ess}=\Sigma_{(1,...,1,(L^n))}^{ess}.$$
\end{proposition}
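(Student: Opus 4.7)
The strategy is to pick the flag inductively via Bertini so that the divisor bundle $L_1$ coincides with $L$, and then identify the Okounkov body via its slices.

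Since $L$ is very ample, Bertini's theorem gives a smooth irreducible divisor $X_1 \in |L|$. As $L_{|X_1}$ remains very ample on $X_1$, we iterate: at each step pick $X_{i+1}$ to be a smooth irreducible divisor representing $L_{|X_i}$ on $X_i$, and finally choose any $p \in X_{n-1}$. The critical feature of this flag is that the line bundle $L_1$ associated with the divisor $X_1$ is $L$ itself.

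Next, I prove $\Delta(L) = \Sigma_{(1,\ldots,1,(L^n))}$ by induction on $n$. The base case $n = 1$ is exactly (\ref{okcurve}). For the inductive step, I compute the slice at $x_1 = r$ for rational $r \in [0,1)$: since $L_1 = L$, formula (\ref{oktrans}) combined with the homogeneity $\Delta((1-r)L) = (1-r)\Delta(L)$ yields
$$\Delta(L) \cap \{x_1 = r\} = (1-r)\bigl(\Delta(L) \cap \{x_1 = 0\}\bigr) + re_1.$$
Applying (\ref{okrestr}) and the induction hypothesis to $L_{|X_1}$ with its induced flag, together with $(L_{|X_1})^{n-1} = L^{n-1}\cdot X_1 = (L^n)$, this slice equals $(1-r)\Sigma_{(1,\ldots,1,(L^n))} + re_1$, which is precisely the slice of $\Sigma_{(1,\ldots,1,(L^n))}$ at height $r$. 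Since the slices agree for a dense set of $r$ and both bodies are closed and convex, they are equal.

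For the essential Okounkov body, the inclusion $\Delta(L)^{ess} \subseteq \Sigma_{(1,\ldots,1,(L^n))}^{ess}$ is immediate, since $\Delta(L)^{ess}$ is open in $\mathbb{R}^n_{\geq 0}$ and contained in $\Delta(L) = \Sigma_{(1,\ldots,1,(L^n))}$, and the essential interior is by definition the largest such open subset. For the converse, let $x \in \Sigma_{(1,\ldots,1,(L^n))}^{ess}$. If all coordinates of $x$ are strictly positive, then $x \in \Delta(L)^{\circ}$ and Lemma \ref{compincl} puts a compact neighborhood of $x$ inside $\Delta_k(L)^{\circ} \subseteq \Delta_k(L)^{ess}$ for $k$ sufficiently divisible. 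If $x$ lies on a coordinate hyperplane, say $\{x_1 = 0\}$, then induction on $n$ applied to $L_{|X_1}$ shows that $(x_2,\ldots,x_n) \in \Delta(L_{|X_1})^{ess}$, so many lattice points of $\mathcal{A}(mL_{|X_1})$ fill a neighborhood of $mx$ inside $\{x_1 = 0\}$; multiplying the corresponding sections by powers of the defining section of $X_1$ produces additional lattice points of $\mathcal{A}(kL)$ at heights $x_1 = j/k$ realizing the scaled-down slices of the simplex, and taking convex hulls finally yields a genuine $\mathbb{R}^n_{\geq 0}$-neighborhood of $x$ inside $\Delta_k(L)^{ess}$.

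The main obstacle is precisely this last point: while density and measure considerations suffice to pin down $\Delta(L)$ itself, the essential body requires actually producing lattice points in $\mathcal{A}(kL)$ at small positive first coordinate close to $x$, so that one has a true open neighborhood in $\mathbb{R}^n_{\geq 0}$ rather than merely a dense subset. The choice of flag forcing $L_1 = L$ is exactly what makes the multiplication-by-$s_{X_1}$ device available and renders the slicing recursive.
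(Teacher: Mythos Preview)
Your proof is correct and follows the same route as the paper: choose the flag so that each $X_i$ lies in $|L_{|X_{i-1}}|$ and then identify $\Delta(L)$ by slicing with (\ref{okrestr}) and (\ref{oktrans}) --- the paper slices all the way down to the curve $X_{n-1}$ in one pass, while you organize the same computation as an induction on $n$. For $\Delta(L)^{ess}$ the paper writes only ``similarly'', and your section-lifting plus multiplication-by-$s_{X_1}^j$ argument is precisely what underlies that word; note that once those lattice points at every height $x_1=j/k$ are produced, the resulting convex hull already handles points on any face $\{x_i=0\}$, so the restriction to ``say $\{x_1=0\}$'' in your case split is harmless.
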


\begin{proof}
Since $L$ is very ample we can find a flag $X=X_0 \supset X_{1} \supset ...\supset Y_n= \{p\}$ of smooth irreducible subvarieties of $X$ such that for each $i\in \{1,...,n\}$ the line bundle $L_{|X_{i-1}}$ is associated with the divisor $X_i$ in $X_{i-1}$. 

From repeated use of (\ref{okrestr}) and (\ref{oktrans}) we get that $$\Delta(L)\cap \{x_1=r_1,...,x_{n-1}=r_{n-1}\}=\Delta((1-\sum_i r_i)L_{|X_{n-1}})=[0,((1-\sum_i r_i))(L^n)],$$ using (\ref{okcurve}) and the fact that $deg(L_{X_{n-1}})=(L^n)$. In other words $$\Delta(L)=\Sigma_{(1,...,1,(L^n))}.$$ Since $$\Delta(L_{|Y_{n-1}})^{ess}=[0,(L^n))$$ we similarly get that $$\Delta(L)^{ess}=\Sigma_{(1,...,1,(L^n))}^{ess}.$$ 
\end{proof}

Recall that $$\mu(z):=(|z_1|^2,...,|z_n|^2).$$

\begin{definition}
We define the Okounkov domain $D(L)$ to be $$D(L):=\mu^{-1}(\Delta(L)^{ess}).$$
\end{definition} 

We note that $D(L)$ is a bounded domain in $\mathbb{C}^n$. We also note that when $\Delta(L)^{ess}=\Sigma_{(1,...,1,(L^n))}^{ess}$ we get that $D(L)=E(1,...,1,(L^n))$, i.e. the ellipsoid defined by the inequality $$\sum_{i=1}^{n-1}|z_i|^2+(L^n)^{-1}|z_n|^2<1.$$

\section{Torus-invariant K\"ahler forms and moment maps} \label{moment}

Let $(M,\omega)$ be a symplectic manifold. Assume that there is an $S^1$-action on $M$ which preserves $\omega$ and let $V$ be the generating vector field. We must have that $\mathcal{L}_V\omega=0$. By Cartan's formula we have that $$d(\omega(V,\cdot))=\mathcal{L}_V\omega-d\omega(V,\cdot)=0,$$ so the one-form $\omega(V,\cdot)$ is closed. A function $H$ is called a Hamiltonian for the $S^1$-action if $$dH=\omega(V,\cdot).$$ If $H$ is a Hamiltonian then clearly so is $H+c$ for any constant $c$. If $M$ has an $(S^1)^n$-action which preserves $\omega$, and each individual $S^1$-action has a Hamiltonian $H_i$, we call the map $\mu:=(H_1,...,H_n)$ a moment map for the $(S^1)^n$-action. There is a more invariant way of defining the moment map so that it takes values in the dual of the Lie algebra of the acting group, but we will not go into that here.

Let $\mathcal{A}\subseteq \mathbb{N}^n$ be a finite set and assume that $Conv(\mathcal{A})^{ess}$ is nonempty. Let $$D_{\mathcal{A}}:=\mu^{-1}(Conv(\mathcal{A})^{ess})=\mu^{-1}(Conv(\mathcal{A}))^{\circ}$$ and let $X_{\mathcal{A}}$ denote the manifold we get by removing from $\mathbb{C}^n$ all the submanifolds of the form $\{z_{i_1}=...=z_{i_k}=0\}$ which do not intersect $D_{\mathcal{A}}$. Then $$\phi_{\mathcal{A}}:=\ln\left(\sum_{\alpha\in \mathcal{A}}|z^{\alpha}|^2\right)$$ is a smooth strictly psh function on $X_{\mathcal{A}}$ and we denote by $\omega_{\mathcal{A}}:=dd^c\phi_{\mathcal{A}}$ the corresponding K\"ahler form. 

Note that we can write $$\phi_{\mathcal{A}}(z)=u_{\mathcal{A}}(x):=\ln\left(\sum_{\alpha\in \mathcal{A}}e^{x\cdot \alpha}\right),$$ where $x_i:=\ln|z_i|^2$ and $u_{\mathcal{A}}$ is a convex function on $\mathbb{R}^n$.

Let us think of $(X_{\mathcal{A}},\omega_{\mathcal{A}})$ as a symplectic manifold. The symplectic form $\omega_A$ is clearly invariant under the standard $(S^1)^n$-action on $X_{\mathcal{A}}$ and it is a classical fact that $\mu_{\mathcal{A}}: z\mapsto \nabla u(x)$ is a moment map for this action. To see this we define $u_{\mathcal{A}}(w):=u_{\mathcal{A}}(\textrm{Re} w)$ for $w\in X_{\mathcal{A}}$ and note that $u_{\mathcal{A}}$ is the pullback of $\phi_{\mathcal{A}}$ by the holomorphic map $f:w\to e^{w/2}$. We then have that $f^*\omega_{\alpha}=dd^c u_{\alpha}$. The pullback of the vector field generating the $i$:th $S^1$-action is $(2\pi)\partial/\partial x_i$, so to show that $\partial/\partial x_i u_{\alpha}$ is a Hamiltonian we need to establish that $$d\frac{\partial}{\partial x_i}u_{\mathcal{A}}=dd^c u_{\mathcal{A}}((2\pi)\partial/\partial x_i,\cdot).$$ This is easily checked using that $$dd^cu_{\mathcal{A}}=\frac{1}{2\pi i}\sum_{i,j}\frac{\partial^2 u}{\partial x_i \partial x_j}dw_i\wedge d\bar{w}_j.$$ 

Clearly $$\mu_{\mathcal{A}}(\mathbb{C^*})^n=Conv(\mathcal{A})^{\circ}$$ while $$\mu_{\mathcal{A}}(X_{\mathcal{A}})=Conv(\mathcal{A})^{ess}.$$

Another classical fact is that for any open $(S^1)^n$-invariant set $U\subseteq X_{\mathcal{A}}$ we have that $$\int_U \omega^n_{\mathcal{A}}=\textrm{vol}(\mu_{\mathcal{A}}(U)).$$ To see this, write $f^{-1}(U)=V\times (i\mathbb{R})^n$ and thus $$\int_U \omega^n_{\mathcal{A}}=\int_{V\times (i[0,2\pi])^n}(dd^c u_{\mathcal{A}})^n=\int_V \textrm{det}(\textrm{Hess}(u))=\int_{\nabla u(V)}dx,$$ where $\textrm{Hess}(u)$ denotes the Hessian of $u$, and in the last step we used that this is equal to the Jacobian of $\nabla u$.  

\begin{lemma} \label{embdomain}
Let $U$ be a relatively compact open subset of $D_{\mathcal{A}}$. Then there exists a smooth function $g:X_{\mathcal{A}}\to \mathbb{R}$ with compact support such that $\omega:=\omega_{\mathcal{A}}+dd^c g$ is K\"ahler and on $U$ we have that $\omega=\omega_{st}$. 
\end{lemma}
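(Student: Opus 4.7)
Set $f := |z|^2 - \phi_{\mathcal{A}}$, a smooth real-valued function on $X_{\mathcal{A}}$ satisfying $dd^c f = \omega_{st} - \omega_{\mathcal{A}}$. I would construct $g$ as a cutoff of $f$: pick relatively compact open sets $\bar U \subset V_1 \Subset V_2 \Subset D_{\mathcal{A}}$ (these exist since $\bar U \Subset D_{\mathcal{A}}$) and a smooth cutoff $\chi \in C_c^\infty(X_{\mathcal{A}})$ with $\chi \equiv 1$ on a neighborhood of $\bar V_1$ and $\operatorname{supp}\chi \subset V_2$. Put $g := \chi f$. This $g$ is smooth with compact support contained in $\bar V_2 \subset X_{\mathcal{A}}$. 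Setting $\omega := \omega_{\mathcal{A}} + dd^c g$, one immediately sees that $\omega = dd^c(\phi_{\mathcal{A}} + f) = dd^c|z|^2 = \omega_{st}$ on $V_1 \supset U$, and $\omega = \omega_{\mathcal{A}}$ on the open set $X_{\mathcal{A}} \setminus \operatorname{supp}\chi$; both are K\"ahler.

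The content then lies in verifying that $\omega$ remains positive throughout the transition annulus $V_2 \setminus V_1$. Expanding $dd^c(\chi f)$ by Leibniz one obtains
\[
\omega = (1-\chi)\,\omega_{\mathcal{A}} + \chi\,\omega_{st} + d\chi \wedge d^c f + df \wedge d^c \chi + f\,dd^c\chi .
\]
The first two terms form a convex combination of the K\"ahler forms $\omega_{\mathcal{A}}$ and $\omega_{st}$, which on the compact set $\bar V_2$ is bounded below by some positive multiple $c\,\omega_{st}$ (both forms are strictly positive and their ratio has a uniform positive lower bound on $\bar V_2 \Subset D_{\mathcal{A}}$). The remaining three terms involve first- and second-order derivatives of $\chi$ and are bounded in norm by $C\,\|\chi\|_{C^2}\,\|f\|_{C^1(\bar V_2)}$ for a geometric constant $C$.

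The main obstacle is that the error term could a priori destroy the positivity produced by the main convex combination. To overcome this, I would take $V_1, V_2$ with sufficiently wide separation inside $D_{\mathcal{A}}$ and pick $\chi$ depending slowly on a suitable real parameter (for instance through a strictly plurisubharmonic function adapted to the geometry of $\bar U$), so that both $\|d\chi\|_\infty$ and $\|dd^c\chi\|_\infty$ become small enough that the error term is dominated by $c\,\omega_{st}$. An alternative in the spirit of Demailly's regularized maximum is to replace the hard cutoff by smoothly gluing the potentials $\phi_{\mathcal{A}}$ and $|z|^2$ via $\psi := \max\nolimits_\varepsilon(\phi_{\mathcal{A}}, |z|^2 + C)$ on a suitable bounded open neighborhood of $\bar U$ inside $X_{\mathcal{A}}$ for an appropriate constant $C$, extending by $\phi_{\mathcal{A}}$ outside, and taking $g := \psi - \phi_{\mathcal{A}}$. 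Either approach produces a smooth compactly supported $g$ with $\omega$ K\"ahler on all of $X_{\mathcal{A}}$ and coinciding with $\omega_{st}$ on $U$.
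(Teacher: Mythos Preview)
Your regularized-maximum alternative is the right idea, but there is a genuine gap: you cannot glue $\phi_{\mathcal{A}}$ directly to $|z|^2+C$. For the extension by $\phi_{\mathcal{A}}$ outside a bounded set $W$ to be smooth you need $|z|^2+C<\phi_{\mathcal{A}}-\varepsilon$ near $\partial W$, while on $U$ you need $|z|^2+C>\phi_{\mathcal{A}}+\varepsilon$. Together these force $\phi_{\mathcal{A}}-|z|^2$ to be strictly larger near $\partial W$ than on $U$. But this is generally impossible: already for $n=1$ and $\mathcal{A}=\{0,1\}$ one has $\phi_{\mathcal{A}}-|z|^2=\ln(1+|z|^2)-|z|^2$, which attains its global maximum at $0$ and is strictly decreasing in $|z|$, so no such $W$ and $C$ exist when $0\in U$. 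More generally $|z|^2$ grows faster than $\phi_{\mathcal{A}}$ (which is only logarithmic) in every direction, so $|z|^2+C$ will dominate $\phi_{\mathcal{A}}$ outside any large ball, the opposite of what you need.

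The paper fixes exactly this by inserting an intermediate step: using the Legendre transform one first produces an $(S^1)^n$-invariant strictly psh function $\phi$ on $X_{\mathcal{A}}$ that equals $|z|^2$ on $U$ but whose moment image (the gradient image of $u(x):=\phi(e^{x/2})$) is compactly contained in $Conv(\mathcal{A})^{ess}$. That growth control makes $\phi_{\mathcal{A}}-\phi$ proper, so after adding a constant one has $\phi+C>\phi_{\mathcal{A}}$ on $U$ while $\phi+C<\phi_{\mathcal{A}}$ outside a compact set, and \emph{then} the regularized maximum $\max_{\mathrm{reg}}(\phi+C+\delta,\phi_{\mathcal{A}})$ yields the desired $g$. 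Your proposal is missing precisely this Legendre-transform modification of $|z|^2$. Your first (cutoff) approach has an analogous problem: enlarging $V_2$ to make $\|\chi\|_{C^2}$ small forces $\|f\|_{C^1(\bar V_2)}$ to grow (since $f=|z|^2-\phi_{\mathcal{A}}\sim|z|^2$), so the error term does not become small, and the vague appeal to a ``slowly varying'' cutoff does not close the argument.
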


\begin{proof}
Using Legendre transforms one can find a smooth $(S^1)^n$-invariant strictly psh function $\phi$ on $X_{\mathcal{A}}$ which is equal to $|z|^2$ on $U$ and such that the image of the gradient of $u(x):=\phi(e^{x_1/2},...,e^{x_n/2})$ is compactly supported in $Conv(\mathcal{A})^{ess}$. One sees then that $\phi_{\mathcal{A}}-\phi$ is proper on $X_{\mathcal{A}}$. Let $C$ be a constant such that $\phi+C>\phi_{\mathcal{A}}$ on $D$. Pick some $\delta>0$ and let $\max_{reg}(x,y)$ be a smooth convex function such that $\max_{reg}(x,y)=\max(x,y)$ whenever $|x-y|>\delta$. Then $\phi':=\max_{reg}(\phi+C+\delta,\phi_{\mathcal{A}})$ is a smooth strictly psh function on $X_{\mathcal{A}}$ which is equal to $\phi+C+\delta$ on $U$ while being equal to $\phi_{\mathcal{A}}$ outside of some compact set. It follows that $g:=\phi'-\phi_{\mathcal{A}}$ has the desired properties.
\end{proof}

\section{K\"ahler embeddings of domains}

In the introduction we had the following definition.

\begin{definition}
We say that a K\"ahler manifold $(Y,\eta)$ fits into $(X,L)$ if for every relatively compact open set $U\Subset Y$ there is a holomorphic embedding $f$ of $U$ into $X$ such that $f_*\eta$ extends to a K\"ahler form on $X$ lying in $c_1(L)$. If in addition $$\int_Y \eta^n=\int_X c_1(L)^n$$ then we say that $(Y,\eta)$ fits perfectly into $(X,L)$.
\end{definition}

Recall that $\mathcal{A}(kL):=\{v(s):s\in H^0(X,kL)\}$.  

\begin{theorem} \label{mainthm2}
Assume that $L$ ample. Then for $k$ large enough, $(X_{\mathcal{A}(kL)},\omega_{\mathcal{A}(kL)})$ fits into $(X,kL)$, and each associated K\"ahler embedding $f:U\to X$ can be chosen so that $$f^{-1}(X_i)=\{z_1=...=z_i=0\}\cap U.$$
\end{theorem}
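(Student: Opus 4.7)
The plan is to combine a weighted-coordinate rescaling embedding near $p$ with a max construction on a Bergman-type metric built from the basis of sections, following the strategy of \cite[Thm.~C]{WN}.

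For $k$ large enough $kL$ is very ample, so the basis $\{s_\alpha\}_{\alpha\in\mathcal{A}(kL)}$ of $H^0(X,kL)$ satisfies
\[ s_\alpha(z) = z^\alpha + \sum_{\beta > \alpha,\, \beta\notin\mathcal{A}(kL)} c_{\alpha\beta}\, z^\beta \]
in the local coordinates near $p$. I would pick weights $a_1 \gg a_2 \gg \cdots \gg a_n > 0$ chosen so that $\alpha \mapsto a\cdot\alpha$ respects the lexicographic order on the (finitely many) exponents appearing in the $s_\alpha$. Given a relatively compact $U \Subset X_{\mathcal{A}(kL)}$, for $t > 0$ small enough the map $f_t(z) := (t^{a_1}z_1,\ldots,t^{a_n}z_n)$ carries $\overline U$ biholomorphically into the chart $V$ around $p$; declare $f := f_t$ to be the K\"ahler embedding. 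The flag condition $f^{-1}(X_i) = \{z_1 = \cdots = z_i = 0\}\cap U$ is built directly into the coordinate form of $f_t$.

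The candidate positive metric on $kL$ is the rescaled Bergman-type metric $\psi_t := \ln\bigl(\sum_{\alpha} |t|^{-2a\cdot\alpha}|s_\alpha|^2\bigr)$. From $s_\alpha(f_t(z)) = t^{a\cdot\alpha}(z^\alpha + \epsilon_{\alpha,t}(z))$ with $\epsilon_{\alpha,t}(z) := \sum_{\beta > \alpha,\, \beta\notin\mathcal{A}(kL)} c_{\alpha\beta}\, t^{a\cdot\beta - a\cdot\alpha}z^\beta \to 0$ uniformly on $\overline U$ as $t \to 0$, it follows that $f_t^*\psi_t \to \phi_{\mathcal{A}(kL)}$ uniformly on $U$. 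This is only approximate matching of the K\"ahler forms; to upgrade to exact equality I would apply a max construction. In the chart $V$ the local psh function $\Phi_{\mathrm{loc}}(w) := \phi_{\mathcal{A}(kL)}(f_t^{-1}(w)) = \ln\bigl(\sum |t|^{-2a\cdot\alpha}|w^\alpha|^2\bigr)$ is a local representative of a metric on $kL$ satisfying $f_t^*\Phi_{\mathrm{loc}} = \phi_{\mathcal{A}(kL)}$ tautologically; the global positive metric $\phi$ is then defined as the regularized maximum $\max_{\mathrm{reg}}(\psi_t,\, \Phi_{\mathrm{loc}} + C_t)$ inside $V$, extended by $\psi_t$ outside. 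The curvature $\omega := dd^c\phi$ is a K\"ahler form in $c_1(kL)$ with $f^*\omega = \omega_{\mathcal{A}(kL)}|_U$.

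The hard part will be the sign-separation estimate required for the max construction to glue correctly, namely $\Phi_{\mathrm{loc}} + C_t > \psi_t + \delta$ throughout $f_t(U)$ and $\psi_t > \Phi_{\mathrm{loc}} + C_t + \delta$ on a collar separating $f_t(U)$ from $\partial V$, so that $\max_{\mathrm{reg}}$ equals $\Phi_{\mathrm{loc}}+C_t$ on $f_t(U)$ yet already agrees with $\psi_t$ before leaving $V$. Since
\[ \Phi_{\mathrm{loc}} - \psi_t = \ln\frac{\sum|t|^{-2a\cdot\alpha}|w^\alpha|^2}{\sum|t|^{-2a\cdot\alpha}|s_\alpha(w)|^2} \]
is controlled by how close the rescaled sections $|t|^{-a\cdot\alpha}s_\alpha(f_t(\cdot))$ sit to the pure monomials $z^\alpha$, the required bound is a quantitative manifestation of the toric degeneration of $(X,kL)$ as $t \to 0$; pinning this down, and in particular choosing $V$ and $C_t$ so that the two regimes are cleanly separated, is the technical heart of the proof.
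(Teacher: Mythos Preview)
Your overall strategy matches the paper's almost exactly: the same weighted-coordinate embedding $z\mapsto \tau^{\gamma}z$ (your $f_t$), the same rescaled Bergman metric $\psi_t=\ln\sum_{\alpha}|t|^{-2a\cdot\alpha}|s_{\alpha}|^2$ as the global positive metric of $kL$, and a regularized-max patching of the local toric potential $\phi_{\mathcal{A}(kL)}$ against it. You also correctly flag the sign-separation as the crux.

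Where your plan would get stuck is the \emph{mechanism} for that separation. A constant shift $C_t$ of $\Phi_{\mathrm{loc}}$ cannot produce both inequalities. On any fixed compact $K\supset U$ in $X_{\mathcal{A}(kL)}$, the uniform convergence $f_t^*\psi_t\to\phi_{\mathcal{A}(kL)}=f_t^*\Phi_{\mathrm{loc}}$ forces $|\Phi_{\mathrm{loc}}-\psi_t|<\epsilon$ throughout $f_t(K)$ for small $t$, so no constant separates the two regimes there. And on the collar between $f_t(K)$ and $\partial V$ (where $z=f_t^{-1}(w)$ is unbounded), the difference $\Phi_{\mathrm{loc}}-\psi_t$ has no definite sign: for small $t$ it is governed by ratios like $|w^{\alpha^*}|/|s_{\alpha^*}(w)|$ for the top-weight index $\alpha^*$, which can be large or small depending on $w$. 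So the gluing cannot be placed near $\partial V$ either.

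The paper's fix is small but essential: instead of a constant, subtract a bump. Choose a smooth $g\colon X_{\mathcal{A}(kL)}\to[0,1]$ with $g\equiv 0$ on $U$ and $g\equiv 1$ outside some compact $K\supset U$, and take $\delta>0$ so small that $\phi:=\phi_{\mathcal{A}(kL)}-4\delta g$ is still strictly psh. For $t$ small enough that $f_t(K)\subset V$ and $|f_t^*\psi_t-\phi_{\mathcal{A}(kL)}|<\delta$ on $K$, the regularized maximum $\max_{\mathrm{reg}}\bigl(\phi,\,f_t^*\psi_t-2\delta\bigr)$ equals $\phi=\phi_{\mathcal{A}(kL)}$ on $U$ and equals $f_t^*\psi_t-2\delta$ near $\partial K$; the gluing happens entirely \emph{inside} $f_t(K)$, and the result extends globally by $\psi_t-2\delta$. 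The bump creates the $4\delta$ window in which the two potentials cross; a constant cannot.
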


Before proving Theorem \ref{mainthm2} we need a simple lemma. 

\begin{lemma} \label{vector}
For any finite set $\mathcal{A}\subseteq \mathbb{N}^n$ there exists a $\gamma\in (\mathbb{N}_{>0})^n$ such that for all $\alpha\in \mathcal{A}$: 
\begin{equation} \label{desprop}
\alpha<\beta \in \mathbb{N}^n \Longrightarrow \alpha \cdot \gamma < \beta \cdot \gamma.
\end{equation}
\end{lemma}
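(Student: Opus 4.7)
The plan is to build $\gamma$ inductively from the last coordinate backwards, choosing each $\gamma_k$ large enough to dominate the contributions of all later coordinates. The key asymmetry to exploit is that while $\beta$ ranges over all of $\mathbb{N}^n$ and is unbounded, $\alpha$ lies in the finite set $\mathcal{A}$. Setting $M := \max\{\alpha_i : \alpha \in \mathcal{A},\ 1 \leq i \leq n\}$, the nonnegativity constraint $\beta_i \geq 0$ gives the one-sided bound $\beta_i - \alpha_i \geq -M$ for any $\alpha \in \mathcal{A}$ and any $\beta \in \mathbb{N}^n$, a bound that does \emph{not} deteriorate as $\beta$ grows.

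Concretely I would define $\gamma_n := 1$ and recursively put $\gamma_k := M \sum_{i > k} \gamma_i + 1$ for $k = n-1, n-2, \ldots, 1$. Given $\alpha \in \mathcal{A}$ and $\beta \in \mathbb{N}^n$ with $\alpha < \beta$ in the lexicographic order, let $k$ be the smallest index where they differ; by definition $\alpha_i = \beta_i$ for $i < k$ and $\beta_k - \alpha_k \geq 1$. Then
$$\beta \cdot \gamma - \alpha \cdot \gamma = \gamma_k(\beta_k - \alpha_k) + \sum_{i > k} \gamma_i(\beta_i - \alpha_i) \geq \gamma_k - M \sum_{i > k} \gamma_i = 1 > 0,$$
which verifies (\ref{desprop}).

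There is no serious obstacle here: the argument is the standard trick of approximating a lex-type order by a linear weight on the bounded portion of $\mathbb{N}^n$ that actually matters, and the only mildly delicate point is that $\beta$ is not confined to any bounded set, which is absorbed by the one-sided bound $\beta_i \geq 0$. For the deglex order or any other additive order used in Section \ref{Secok}, the same recursive construction applies after reordering the coordinates appropriately, or by first augmenting $\gamma$ so that its inner product with $\alpha$ dominates in the total-degree component; this is purely combinatorial and involves no new idea.
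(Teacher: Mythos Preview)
Your proof is correct and follows essentially the same approach as the paper: both exploit that $\alpha$ is bounded while $\beta\ge 0$ gives a one-sided bound, and both take $\gamma_i$ to be a geometric sequence in $n-i$ with base exceeding the coordinate bound (your recursion unwinds to $\gamma_k=(M+1)^{n-k}$, while the paper sets $\gamma_i=(2C)^{n-i}$ with $C>|\alpha|$ for all $\alpha\in\mathcal{A}$). The verification via the first differing index is identical.
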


This is a standard fact which is true for any additive order, see e.g. \cite[Lem. 8]{Anderson}. It plays a key role in constructing toric degenerations.

\begin{proof}
Pick a number $C\in \mathbb{N}$ such that $C>|\alpha|$ for all $\alpha\in \mathcal{A}$. We claim that $$\gamma:=\sum_{i}(2C)^{n-i}e_i$$ has the desired property (\ref{desprop}). Assume that $\alpha<\beta$. By definition there is an index $j$ such that $\alpha_i=\beta_i$ for $i<j$ while $\beta_j>\alpha_j$. It follows that 
\begin{eqnarray*}
(\beta-\alpha)\cdot \gamma=\sum_i (2C)^{n-i}(\beta_i-\alpha_i)=(2C)^{n-j}(\beta_j-\alpha_j)+\sum_{i>j}(2C)^{n-i}(\beta_i-\alpha_i)\geq \\ \geq (2C)^{n-j}-|\alpha|\sum_{i>j}(2C)^{n-i}\geq C^{n-j}>0.
\end{eqnarray*}
\end{proof}

We can now prove Theorem \ref{mainthm2}. As in \cite{Kaveh3} the proof relies on a toric deformation, given by a suitable choice of $\gamma$. However, instead of coupling it with a gradiant-Hamiltonian flow, we finish the proof using a max construction. This is similar to the proof of Theorem D in \cite{WN}.

\begin{proof}
Recall that we have local holomorphic coordinates $z_i$ centered at $p$. We assume that the unit ball $B_1\subset \mathbb{C}^n$ lies in the image of the coordinate chart $z: V \to \mathbb{C}^n$.  

Let $k$ be large enough so that $Conv(\mathcal{A}(kL))$ has nonempty interior and let $U$ be a relatively compact open set in $X_{\mathcal{A}(kL)}$.

Pick a basis $s_{\alpha}$ for $H^0(X,kL)$ indexed by $\mathcal{A}(kL)$ such that locally $$s_{\alpha}=z^{\alpha}+\sum_{\beta>\alpha} a_{\beta}z^{\beta}.$$ Pick a $\gamma$ as in Lemma \ref{vector} with $\mathcal{A}:=\mathcal{A}(kL)$ and let $\tau^{\gamma}z:=(\tau^{\gamma_1}z_1,...,\tau^{\gamma_n}z_n)$. It follows that 
\begin{equation} \label{approxs}
s_{\alpha}(\tau^{\gamma}z)=\tau^{\alpha\cdot \gamma}(z^{\alpha}+o(|\tau|))
\end{equation} 
for $\tau^{\gamma}z\in B_1$.

Let $f:X_{\mathcal{A}(kL)}\to [0,1]$ be a smooth function such that $f\equiv 0$ on $U$ and $f\equiv 1$ on the complement of some smoothly bounded compact set $K\subseteq X_{\mathcal{A}(kL)}$. Pick $0<\delta \ll 1$ such that $$\phi:=\phi_{\mathcal{A}(kL)}-4\delta f$$ is still strictly psh. It follows from (\ref{approxs}) that we can pick $0<\tau\ll 1$ such that $\tau^{\gamma}z\in B_1$ whenever $z\in K$ and so that $$\phi>\ln\left(\sum_{\alpha\in \mathcal{A}(kL)}\left|\frac{s_{\alpha}(\tau^{\gamma}z)}{\tau^{\alpha\cdot \gamma}}\right|^2\right)-\delta$$ on $U$ while $$\phi<\ln\left(\sum_{\alpha\in \mathcal{A}(kL)}\left|\frac{s_{\alpha}(\tau^{\gamma}z)}{\tau^{\alpha\cdot \gamma}}\right|^2\right)-3\delta$$ near $\partial K$. 

Let $\max_{reg}(x,y)$ be a smooth convex function such that $\max_{reg}(x,y)=\max(x,y)$ whenever $|x-y|>\delta$. Then the regularized maximum $$\phi':=\max_{reg}\left(\phi,\ln\left(\sum_{\alpha\in \mathcal{A}(kL)}|\frac{s_{\alpha}(\tau^{\gamma}z)}{\tau^{\alpha\cdot \gamma}}|^2\right)-2\delta\right)$$ is smooth and strictly plurisubharmonic on $X_{\mathcal{A}(kL)}$, identically equal to $\phi$ on $U$ while identically equal to $\ln(\sum_{\alpha\in \mathcal{A}(kL)}|\frac{s_{\alpha}(\tau^{\gamma}z)}{\tau^{\alpha\cdot \gamma}}|^2)-2\delta$ near the boundary of $K$. We get that $$\omega:=dd^c\phi'$$ is equal to $\omega_{\mathcal{A}(kL)}$ on $U$. 

If we assume that $k$ is large enough so that $kL$ is very ample then $\ln(\sum_{\alpha\in \mathcal{A}(kL)}|\frac{s_{\alpha}(\tau^{\gamma}z)}{\tau^{\alpha\cdot \gamma}}|^2)$ extends as a positive metric of $kL$ and thus $\omega$ extends to a K\"ahler form in $c_1(kL)$. 

Since $U$ was arbitrary this shows that $(X_{\mathcal{A}(kL)},\omega_{\mathcal{A}(kL)})$ fits into $(X,kL)$. We also note that the embedding $f$ of $U$ into $X$ was given by $z\mapsto \tau^{\gamma}z$, and thus we have that $$f_R^{-1}(X_i)=\{z_1=...=z_i=0\}\cap U.$$  
\end{proof}

We can now combine this result with Lemma \ref{embdomain} to obtain Theorem \ref{mainthmOk}.

\begin{customthm}{A} 
We have that $(D(L),\omega_{st})$ fits perfectly into $(X,L)$ and each associated K\"ahler embedding $f:U\to X$ can be chosen so that $$f^{-1}(X_i)=\{z_1=...=z_i=0\}\cap U.$$  
\end{customthm}

\begin{proof}
If $U$ is a relatively compact open set in $D(L)$ then by Lemma \ref{compincless} for $k>0$ divisible enough the closure of $U$ is contained in $\mu^{-1}(\Delta_k(L)^{ess})$, or in other words, $\sqrt{k}U$ is relatively compact in $D_{\mathcal{A}(kL)}$, which is in turn relatively comapact in $X_{\mathcal{A}(kL)}$. Thus by Lemma \ref{embdomain} there exists a smooth function $g:X_{\mathcal{A}(kL)}\to \mathbb{R}$ with support on a relatively compact set $U'$ such that $\omega:=\omega_{\mathcal{A}(kL)}+dd^c g$ is K\"ahler and on $\sqrt{k}U$ we have that $\omega=\omega_{st}$. By Theorem \ref{mainthm2}, if $k$ is large enough, $(X_{\mathcal{A}(kL)},\omega_{\mathcal{A}(kL)})$ fits into $(X,kL)$. Thus we can find a holomorphic embedding $f':U'\to X$ such that $f'_*\omega_{\mathcal{A}(kL)}$ extends to a K\"ahler form $\omega\in c_1(kL)$. Then letting $f:U\to X$ be defined as $f(z):=f'(\sqrt{k}z)$ we get that $f_*\omega_{st}=\frac{1}{k}f'_*{\omega_{st}}_{|\sqrt{k}U}$ extends to a K\"ahler form $\omega\in c_1(L)$.  

That $$\int_{D(L)}\omega_{st}^n=\int_X c_1(L)^n$$ followed from Theorem \ref{Okvolume} and it is clear that the $f:U\to X$ we found had the property that $$f^{-1}(X_i)=\{z_1=...=z_i=0\}\cap U.$$
\end{proof}

\begin{customthm}{B}
If $L$ is very ample then we have that $(E(1,...,1,(L^n)),0,\omega_{st})$ fits perfectly into $(X,L)$, and the associated embeddings can be chosen to be centered at any point $p\in X$.
\end{customthm}

\begin{proof}
This follows directly from combining Theorem \ref{mainthmOk} with Proposition \ref{specialbody}
\end{proof}

\section{Big line bundles}

If $L$ is big but not ample there are no K\"ahler forms in $c_1(L)$. Instead one can consider K\"ahler currents with analytic singularities that lies in $c_1(L)$. We can use these to define what it should mean for a K\"ahler manifold $(Y,\eta)$ to fit into $(X,L)$ when $L$ is just big.   

\begin{definition}
If $L$ is big we say that a K\"ahler manifold $(Y,\eta)$ fits into $(X,L)$ if for every relatively compact open set $U\subseteq Y$ there is a holomorphic embedding $f$ of $U$ into $X$ such that $f_*\eta$ extends to a K\"ahler current with analytic singularities on $X$ lying in $c_1(L)$. If $\dim_{\mathbb{C}}Y=\dim_{\mathbb{C}}X=n$ and $$\int_Y\eta^n=\int_X c_1(L)^n$$ we say that $(Y,\eta)$ fits perfectly into $(X,L)$
\end{definition}

\begin{customthm}{C}
We have that $(D(L),\omega_{st})$ fits perfectly into $(X,L)$. We can furthermore choose each embedding $f:U\to X$ ($U\subset D(L)$) so that $$f^{-1}(X_i)=\{z_1=...=z_i=0\}\cap U.$$ 
\end{customthm}

If $L$ is big and $k$ is large enough, then if $s_m$ is a basis for $H^0(kL)$ we get that \\$dd^c\ln(\sum_m |s_m|^2)$ is a K\"ahler current with analytical singularities which lies in $c_1(kL)$. Thus one proves Theorem \ref{mainthmbig} exactly as in the ample case.

\noindent {\sc David Witt Nystr\"om, 
Department of Mathematics, Chalmers University of Technology, Sweden \\wittnyst@chalmers.se, danspolitik@gmail.com

\end{document}